\crefname{hypothesis}{Hypothesis}{Hypotheses}
\title{Compress-and-restart block Krylov subspace methods for Sylvester matrix equations} 
\author{Daniel Kressner\thanks{EPF Lausanne, Switzerland, \email{daniel.kressner@epfl.ch}}
	\and Kathryn Lund\thanks{Charles University, Prague, Czech Republic, \email{kathryn.lund@karlin.mff.cuni.cz}.  Supported in part by the Charles University PRIMUS grant, project no. PRIMUS/19/SCI/11, and in part by the SNSF research project \emph{Fast algorithms from low-rank updates}, grant number: 200020\_178806.}
	\and Stefano Massei\thanks{EPF Lausanne, Switzerland,
		\email{stefano.massei@epfl.ch}. Supported by the SNSF research project \emph{Fast algorithms from low-rank updates}, grant number: 200020\_178806.}
	\and
	Davide Palitta\thanks{Research Group Computational Methods in Systems
and Control Theory (CSC),
Max Planck Institute for Dynamics of Complex Technical Systems,
Sandtorstra\ss{e} 1, 39106 Magdeburg, Germany,
		\email{palitta@mpi-magdeburg.mpg.de}. } 
}
\newcommand{\tolres}{\varepsilon}
\newcommand{\tolcomp}{\delta}
\newcommand{\memmax}{\texttt{mem}_{\max}}
\newcommand{\memmaxmath}{\mathtt{mem}_{\max}}
\newcommand{\flagconv}{\texttt{flag}_{\text{conv}}}
\newcommand{\ve}{\bm e}
\newcommand{\vv}{\bm v}
\newcommand{\vC}{\bm C}
\newcommand{\vCtil}{\widetilde{\vC}}
\newcommand{\vD}{\bm D}
\newcommand{\vDtil}{\widetilde{\vD}}
\newcommand{\vE}{\bm E}
\newcommand{\vU}{\bm U}
\newcommand{\vV}{\bm V}
\newcommand{\vX}{\bm X}
\newcommand{\vY}{\bm Y}
\newcommand{\vZ}{\bm Z}
\newcommand{\bUU}{\bm{\mathcal{U}}}
\newcommand{\bVV}{\bm{\mathcal{V}}}
\newcommand{\GG}{\mathcal{G}}
\newcommand{\HH}{\mathcal{H}}
\newcommand{\KK}{\mathcal{K}}
\newcommand{\Rerr}{\Delta R}
\newcommand{\Sigmatil}{\widetilde{\Sigma}}
\newcommand{\Util}{\widetilde{U}}
\newcommand{\Vtil}{\widetilde{V}}
\newcommand{\Wtil}{\widetilde{W}}
\newcommand{\Xtil}{\widetilde{X}}
\newcommand{\Zerr}{\Delta Z}
\newcommand{\Ytil}{\widetilde{Y}}
\newcommand{\spK}{\mathcal K} 
\newcommand{\C}{\mathbb C}
\newcommand{\R}{\mathbb R}
\DeclareMathOperator{\diag}{diag} %
\DeclareMathOperator{\colspan}{colspan} %
\DeclareMathOperator{\rank}{rank} %
\renewcommand{\Re}{\mathrm{Re}}
\newcommand{\inv}{{-1}}
\newcommand{\zero}{{(0)}}
\newcommand{\one}{{(1)}}
\newcommand{\kbar}{\bar{k}}
\renewcommand{\k}{{(k)}}
\newcommand{\kless}{{(k-1)}}
\newcommand{\kmore}{{(k+1)}}
\renewcommand{\j}{{(j)}}
\newcommand{\half}{{1/2}}
\newcommand{\norm}[1]{\left\lVert#1\right\rVert}
\newcommand{\MATLAB}{\textsc{Matlab}~}
\begin{document}

\maketitle

\begin{abstract}
Block Krylov subspace methods (KSMs) comprise building blocks in many state-of-the-art solvers for large-scale matrix equations as they arise, e.g., from the discretization of partial differential equations. While extended and rational block Krylov subspace methods provide a major reduction in iteration counts over polynomial block KSMs, they also require reliable solvers for the coefficient matrices, and these solvers are often iterative methods themselves. It is not hard to devise scenarios in which the available memory, and consequently the dimension of the Krylov subspace, is limited.  In such scenarios for linear systems and eigenvalue problems, restarting is a well explored technique for mitigating memory constraints.  In this work, such restarting techniques are applied to polynomial KSMs for matrix equations with a compression step to control the growing rank of the residual.  An error analysis is also performed, leading to heuristics for dynamically adjusting the basis size in each restart cycle.  A panel of numerical experiments demonstrates the effectiveness of the new method with respect to extended block KSMs.
\end{abstract}


\begin{keywords}
  linear matrix equations, block Krylov subspace methods, low-rank compression, restarts
\end{keywords}

\begin{AMS}
	65F10, 65N22, 65J10, 65F30, 	65F50
\end{AMS}

\section{Introduction}
This work is concerned with numerical methods for solving 
large-scale Sylvester matrix equations of the form
\begin{equation} \label{eq:sylvester}
A X + X B + \vC \vD^* = 0,
\end{equation}
with coefficient matrices $A, B \in \C^{n\times n}$ and $\vC, \vD\in\mathbb C^{n\times s}$. Although it is essential that both $A,B$ are square, it is not essential that they are of the same size. The latter assumption has been made to simplify the exposition; our developments easily extend to square coefficients $A,B$ of different sizes.

Throughout this paper, we will assume $s\ll n$ and view $\vC, \vD$ as block vectors. This not only implies that the right-hand side $\vC \vD^*$ has low rank, but it also implies that, under suitable additional conditions on the coefficients, such as the separability of the spectra of $A$ and $-B$, the desired solution $X$ admits accurate low-rank approximations; see, e.g.,~\cite{BakerEmbreeSabino2015,Grasedyck2004a,Penzl2000}.

The Sylvester equation~\eqref{eq:sylvester} arises in a variety of applications. In particular, model reduction~\cite{Antoulas2005} and robust/optimal control~\cite{Zhou1996} for control problems governed by discretized partial differential equations (PDEs) give rise to Sylvester equations that feature very large and sparse coefficients $A,B$. Also, discretized PDEs themselves can sometimes be cast in the form~\eqref{eq:sylvester} under suitable separability and smoothness assumptions~\cite{Palitta2016}. Another source of applications are linearizations of nonlinear problems, such as the algebraic Riccati equation~\cite{Bini2012}, and dynamic stochastic general equilibrium models~\cite{Binning2013}, where the solution of~\eqref{eq:sylvester} is needed in iterative solvers. We refer to the surveys~\cite{Benner2013,Simoncini2016} for further applications and details.

Several of the applications mentioned above potentially lead to coefficients $A,B$ that are too large to admit any sophisticated operation besides products with (block) vectors. In particular, the sparse factorization of $A,B$ and, consequently, the direct solution of linear systems involving these matrices may be too expensive. This is the case, for example, when sparse factorizations lead to significant fill-in, as for discretized three-dimensional PDEs~\cite{Davis2016}, or when $A,B$ are actually dense, as for discretized boundary integral equations~\cite{Sauter2011}. In these situations, the methods available for solving~\eqref{eq:sylvester} essentially boil down to Krylov subspace methods. One of the most common variants, due to Saad~\cite{Saad1990} and Jaimoukha and Kasenally~\cite{Jaimoukha1994}, first constructs orthonormal bases for the two block Krylov subspaces associated with the pairs $A,\vC$ and $B^*,\vD$, respectively, and then obtains a low-rank approximation to $X$ via a Galerkin condition. In the following, we will refer to this method as the standard Krylov subspace method (SKSM).

The convergence of SKSM can be expected to be slow when the separation between the spectra of $A$ and $-B$ is small. This happens, for example, when $A$ and $B$ are symmetric positive definite and ill-conditioned, which is usually the case for discretized elliptic PDEs. Slow convergence makes SKSM computationally expensive. As the number of iterations increases, the memory requirements as well as the cost for orthogonalization and extracting the approximate solution increase. Restarting is a common way to mitigate these effects when solving linear systems with, e.g., GMRES~\cite{Saad1986}. In principle, the idea of restarting can be directly applied to Sylvester equations because~\eqref{eq:sylvester} can be recast as a linear system of size $n^2\times n^2$. However, as we will discuss in more detail in Section~\ref{sec:algorithm}, such  an approach increases the ranks of the right-hand side and is, in turn, by itself not well suited for SKSM.  In this work, we propose and partly analyze an algorithm that avoids this problem by combining restarting with compression.

The \emph{ADI method}~\cite{ADI} and \emph{rational Krylov subspace methods (RKSM)} \cite{Benner2009} often converge faster in situations where SKSM struggles. However, this improvement comes at the expense of having to solve (shifted) linear systems with $A,B$ in each iteration. As we are bound to the use of matrix-vector products with $A,B$,
iterative methods, such as CG and GMRES, need to be used for solving these linear systems. These inner solvers introduce another error, and the interplay between this inexactness and the convergence of the outer method has been recently analyzed in~\cite{Kuerschner2018}. Two major advantages of the inner-outer paradigm are that it allows for the use of well established linear solvers and the straightforward incorporation of preconditioners. On the other hand, it also has the disadvantage that it is difficult to reuse information from one outer iteration to the next, often leading to a large number of matrix-vector products overall. Moreover, the convergence of the outer scheme strongly depends on the selection of the shift parameters that define the rational functions used within ADI and RKSM. In the case that the chosen shifts do not result in fast convergence, then the disadvantages of SKSM, such as high memory requirements, still persist. Several strategies for choosing the shifts adaptively have been proposed; see~\cite{Benner2014,Druskin2011} and the references therein.  An a priori strategy for selecting shift parameters, which does not take spectral information of $A,B$ into account, is adopted in the so-called \emph{extended Krylov subspace method (EKSM)} \cite{Breiten2016, Simoncini2007}. EKSM is a particular case of RKSM,\footnote{EKSM is, however, often implemented using block Gram-Schmidt, unlike RKSM, which builds one column at a time.} where half of the shifts are set to $0$ and the remaining are set to $\infty$.   Other large-scale methods for Sylvester equations that require the solution of shifted linear systems and could be combined with iterative solvers in an inner-outer fashion include  the approximate power iteration from~\cite{Hodel1996} and the restarted parameterized Arnoldi method in \cite{AhmadJaimoukhaFrangos2010}.

Besides the inner-outer solvers discussed above, only a few attempts have been made to design mechanisms to limit the memory requirements of Krylov subspace methods for Sylvester equations. In~\cite{SorensenAntoulas2002}, a connection to implicitly restarted Arnoldi method for eigenvalue problems is made.
For symmetric (positive definite) $A, B$, a two-pass SKSM, such as the one discussed in \cite{Kressner2008}, could be used. During the first pass, only the projected equation is constructed and solved; in the second pass, the method computes the product of the Krylov subspace bases with the low-rank factors of the projected solution. 

The rest of this paper is structured as follows. In Section~\ref{sec:algorithm}, we describe the compress-and-restart method proposed in this work for Sylvester equations as well as its adaptation to the special case of Lyapunov matrix equations. Section~\ref{sec:error} provides an error analysis for the approximate solution obtained with our method. Finally, in Section~\ref{sec:numexperiments}, the performance of our method is demonstrated and compared to combinations of EKSM with iterative linear solvers.

\section{Restarted SKSM with compression for linear matrix equations} \label{sec:algorithm}
We first recall the general framework of projection methods for Sylvester equations and then explain our new restarted variant.

\subsection{Projection methods and SKSM} \label{sec:sksm}
Projection methods for solving the Sylvester equation~\eqref{eq:sylvester} seek an approximate solution of the form \begin{equation} \label{eq:approximatesoln}
\Xtil = \bUU_m \Ytil \bVV_m^*,
\end{equation}
where the columns of $\bUU_m$ and $\bVV_m$ form orthonormal bases of suitably chosen (low-dimensional) subspaces. The small core factor $\Ytil$ is determined by, e.g., imposing a Galerkin condition on the residual matrix $R = A \Xtil + \Xtil B+ \vC \vD^*$; see, e.g., \cite{Simoncini2016}.

In SKSM, the subspaces determining~\eqref{eq:approximatesoln} are block Krylov subspaces. More specifically,
$\bUU_m = [\vU_1 | \cdots | \vU_m]$, $\bVV_m = [\vV_1 | \cdots | \vV_m]\in\mathbb{R}^{n\times ms}$; $\vU_i, \vV_i \in \mathbb{R}^{n\times s}$; and $i = 1, \ldots, m$, are such that
\[
\text{range}(\bUU_m) = \KK_m(A,\vC) = \colspan\{ \vC, A\vC, A^2\vC,\ldots, A^{m-1} \vC\} \subset \C^n
\]
and, analogously, $\text{range}(\bVV_m) = \KK_m(B^*,\vD)$.\footnote{Note that in the block Krylov subspace literature, often a distinction is made between the ``true" block Krylov subspace with elements in $\C^{n \times s}$ and the span of all the columns of the basis vectors \cite{FrommerLundSzyld2017, Gutknecht2007}.  For the sake of simplifying notation, $\KK_m(A,\vC)$ and $\KK_m(B^*,\vD)$ will always denote the latter here, meaning they are subspaces of $\C^n$.} If the block Arnoldi process is used for obtaining these bases, then the following block Arnoldi relations hold:
\begin{equation} \label{eq:arnoldirelations}
A \bUU_m = \bUU_m \HH_m + \vU_{m+1} H_{m+1,m} \vE_m^*,
\qquad
B^* \bVV_m = \bVV_m \GG_m + \vV_{m+1} G_{m+1,m} \vE_m^*,
\end{equation}
with $ms\times ms$ block Hessenberg matrices $\HH_m$ and $\GG_m$; $s \times s$ matrices $H_{m+1,m}$ and $G_{m+1,m}$;
and $\vE_m^* = [0_s | \cdots | 0_s | I_s] = \ve_m^* \otimes I_s$, where $0_s$ and $I_s$ denote the $s \times s$ zero and identity matrices, respectively. We refer to, e.g., \cite{FrommerLundSzyld2017, Gutknecht2007} for more details on block Krylov subspaces. The matrix $\Ytil$ is obtained by solving the projected equation
\begin{equation} \label{eq:project_symm_lyap_rank1}
\HH_m Y + Y \GG_m^* + \big(\bUU_m^* \vC \big) \big(\bVV_m^* \vD \big)^* = 0.
\end{equation}
This is again a Sylvester equation of the form~\eqref{eq:sylvester} and, since $m$ should be much smaller than $n$, the equation is of moderate size. Therefore a direct solver, such as the Bartels-Stewart method \cite{Bartels1972}, can be employed for its solution.

Throughout the above discussion, we assumed that none of the block basis vectors $\vU_i$ or $\vV_i$, $i = 1, \ldots, m$, is rank-deficient.  Numerical rank-deficiency is rare in practice, but so-called ``inexact" rank-deficiency may benefit from a deflation or column-replacement procedure; see, e.g., \cite{Birk2015, Soodhalter2017}.

Having the solution $\widetilde Y$ of~\eqref{eq:project_symm_lyap_rank1} at hand, we can compute the Frobenius norm of the residual matrix at a low cost as explained in~\cite{residual,Simoncini2016}:
\begin{equation}\label{eq.res_F}
\norm{R}_F^2 = \big\|{H_{m+1,m} \vE_m^* \Ytil}\big\|_F^2 + \big\|{\Ytil \vE_m G_{m+1,m}^*}\big\|_F^2.
\end{equation}
Similarly, for the spectral norm we have
\begin{equation}\label{eq.res_2}
\norm{R}_2 = \max\big\{\big\|{H_{m+1,m} \vE_m^* \Ytil}\big\|_2, \big\|{\Ytil \vE_m G_{m+1,m}^*}\big\|_2\big\}. 
\end{equation}

Once the residual norm is sufficiently small, the approximation $\Xtil$ from \eqref{eq:approximatesoln} is returned. Note that $\Xtil$ is a large, dense matrix, which is always kept in factored form. Specifically, given a factorization $\Ytil = \vY_L \vY_R^*$, which can be computed, e.g., via a truncated singular value decomposition (SVD), we store the low-rank factors $\vX_L = \bUU_m \vY_L$ and $\vX_R = \bVV_m \vY_R$ of $\Xtil = \vX_L \vX^*_R$, where $L$ and $R$ here simply denote ``left" and ``right," respectively.

\subsection{Restarts and compression}
We now present the new restarted procedure. Suppose that $m_0$ iterations of SKSM have been performed, leading to an approximate, possibly quite inaccurate solution $X^\zero = \vX_L^\zero \big(\vX_R^\zero \big)^*$ with
\[
Y^\zero = \vY_L^\zero \big(\vY_R^\zero \big)^*, \quad
\vX_L^\zero = \bUU_{m_0} \vY_L^\zero, \mbox{ and } 
\vX_R^\zero = \bVV_{m_0} \vY_R^\zero.
\]
For a linear system, a correction to a given approximate solution is obtained by solving the linear system (approximately) with the right-hand side replaced by the residual; see, e.g.,~\cite{Higham2002}. Applying this principle to~\eqref{eq:sylvester}, one first solves
\begin{equation} \label{eq:residual_equation_symm_lyap}
A Z^\one + Z^\one B + R^\zero = 0,
\end{equation}
with $R^\zero := A \vX_L^\zero \big(\vX_R^\zero \big)^* + \vX_L^\zero \big(\vX_R^\zero \big)^* B + \vC \vD^*$, and then adds the correction $Z^\one$ to $X^\zero$ in order to obtain $X^\one$.

The Arnoldi relations~\eqref{eq:arnoldirelations} imply that the residual matrix $R^\zero$
admits the following low-rank representation:
\begin{align*}
	R^\zero
	= &\, A \ \bUU_{m_0} Y^\zero \bVV_{m_0}^* + \bUU_{m_0} Y^\zero \bVV_{m_0}^* B + \vC \vD^* \\
	= &\, \bUU_{m_0} (\HH_{m_0} Y^\zero + Y^\zero \GG_{m_0}^*+ (\bUU_{m_0}^* \vC)(\bVV_{m_0}^* \vD)^*) \bVV_{m_0}^* \\ 
	  &\, + \vU_{m_0+1} H_{m_0+1,m_0} \vE_{m_0}^* Y^\zero \bVV_{m_0}^* 
			+ \bUU_{m_0} Y^\zero \vE_{m_0} G_{m_0+1,m_0}^* \vV_{m_0+1}^* \\
	= &\, \big[ \vU_{m_0+1} H_{m_0+1,m_0} \,|\, \bUU_{m_0} Y^\zero \vE_{m_0} G_{m_0+1,m_0}^*\big]
		  \big[\bVV_{m_0} Y^\zero \vE_{m_0} \,|\,  \vV_{m_0+1}\big]^* \\
	:= &\, \vC^\one \big(\vD^\one \big)^*.
\end{align*}
Because $\vC^\one$, $\vD^\one$ have $2s$ columns, this shows that $R^\zero$ has rank at most $2s$ and, in turn, we can again employ a block Krylov subspace method for solving~\eqref{eq:residual_equation_symm_lyap}. Note, however, that the Krylov subspaces are different from the ones used for $X^\zero$. In order to distinguish them more clearly, we will from now on add the superscript $^\zero$ to quantities generated by SKSM for constructing $X^\zero$; that is, we will write $\bUU^\zero_{m_0}, \bVV^\zero_{m_0}$ instead of $\bUU_{m_0}, \bVV_{m_0}$.  SKSM applied to the correction equation~\eqref{eq:residual_equation_symm_lyap} constructs orthonormal bases $\bUU^\one_{m_1}$ and $\bVV^\one_{m_1}$ for $\KK_{m_1}(A,\vC^\one)$ and $\KK_{m_1}(B^*,\vD^\one)$, respectively. The correction is again returned in factorized form as
$Z^\one = \vZ^\one_L \big(\vZ^\one_R \big)^*$, with $Y^\one = \vY^\one_L \big(\vY^\one_R \big)^*$, $\vZ^\one_L = \bUU^\one_{m_1} \vY^\one_L$, $\vZ^\one_R = \bVV^\one_{m_1} \vY^\one_R$.

The procedure can be iterated; i.e., we can perform multiple restarts. After $k$ restarts, the approximate solution is given by 
\begin{equation} \label{eq:calXk}
	X^\k
	:= \sum_{j = 0}^k Z^\j
	= \big[\bUU^\zero_{m_0} \vY_L^\zero \,| \cdots |\, \bUU^\k_{m_k} \vY_L^\k \big]
	\big[\bVV^\zero_{m_0} \vY_R^\zero \,| \cdots |\, \bVV^\k_{m_k} \vY_R^\k \big]^*.
\end{equation}

The residual for $X^\k$ is equal to the one provided by the last term $Z^\k$ in the summation. Indeed,
\begin{align*}
  A X^\k + X^\k B + \vC \vD^*
  & = A \bigg(\sum_{j=0}^k Z^\j \bigg) + \bigg(\sum_{j=0}^k Z^\j \bigg) B + \vC \vD^* \\
  & = A \bigg(\sum_{j=1}^k Z^\j \bigg) + \bigg(\sum_{j=1}^k Z^\j \bigg) B + R^\zero \\
  & = \ldots\\
  & = A Z^\k + Z^\k B + R^\kless
  = R^\k.
\end{align*}
This relationship can be exploited to design efficient convergence checks within the restarted procedure; see also the discussion in Section~\ref{sec:error}. See \cite{Benner2014a} for a similar procedure within the squared Smith method for certain large-scale Smith equations.

An evident shortcoming of the described procedure is that every time we restart, the rank of the residual may double, thus leading to increasingly large Krylov bases that will inevitably exceed memory capacity. This issue can be mitigated by compressing the residual factors before constructing the Krylov subspace in each cycle. For this task, a well-known QR-SVD-based technique can be employed; see, e.g., \cite[Section 2.2.1]{Kressner2011}. We report such a scheme in Algorithm~\ref{alg:compression} for completeness. Note that there is some flexibility in the choice of truncation criterion in line~\ref{line:truncation}.

In this work, we consider two possibilities. Truncating all singular values below a chosen tolerance $\delta > 0$ implies that the spectral norm truncation error is bounded by $\delta$. Alternatively, the Frobenius norm of the error is bounded by $\delta$ if we make sure that the Euclidean norm of the truncated singular value remains below $\delta$.

The described compression is not only applied to the residual but also each time when the approximate solution~\eqref{eq:calXk} is updated. The impact of these compressions on the quality of the computed solution is discussed in Section~\ref{sec:error}.
\begin{algorithm}
	\caption{Compression of $\vC \vD^*$\label{alg:compression}}
	\begin{algorithmic}[1]
		\Procedure{Compress}{$\vC$, $\vD$, $\tolcomp$}
		\State Compute economy-size QR decomposition $\vC = Q_C R_C$
		\State Compute economy-size QR decomposition $\vD = Q_D R_D$
		\State Compute SVD $R_C R_D^* = U \Sigma V^*$
		\State Truncate $U \Sigma V^* \approx \Util \Sigmatil \Vtil^*$ up to $\tolcomp$ \label{line:truncation}
		\State \Return $\vCtil:= Q_C \Util \Sigmatil^\half$ and $\vDtil:= Q_D \Vtil \Sigmatil^\half$
		\EndProcedure
	\end{algorithmic}
\end{algorithm}

Another measure to make sure that memory consumption stays moderate is to impose a maximum number of iterations $m_k$ in each cycle of SKSM. As the memory requirements are primarily dictated by the number of basis vectors that need to be stored in $\bUU^\k_{m_k}$ and $\bVV^\k_{m_k}$, we set 
\[
m_k := \lfloor \memmax/(2s_k) \rfloor,
\]
where $\memmax$ is the user-defined, maximum number of basis vectors that can be allocated at once, and $s_k$ is the number of columns of the residual factors $\vC^\k, \vD^\k$ after truncation. 

The whole procedure, which combines restarting with compression, is reported in Algorithm~\ref{alg:restarted_sylv}.  Note that our pseudocode is not written to optimize memory allocation overall. For best performance, one should a priori estimate the storage needed for the final solution components $\vX_L$ and $\vX_R$ and take this into account along with $\memmax$.
\begin{algorithm}
	\caption{Restarted Sylvester solver \label{alg:restarted_sylv}}
	\begin{algorithmic}[1]
		\Procedure{Restarted\_Sylv}{$A$, $B$, $\vC$, $\vD$, $\memmax$, $k_{\max}$, $\tolres$, $\tolcomp$}
		\State Initialize  $\vX_L = [\,]$, $\vX_R = [\,]$, $\vC^\zero = \vC$, $\vD^\zero = \vD$, $\flagconv = 0$
		\For{$k = 0, \ldots, k_{\max}$}
			\State Set $m_k = \lfloor \memmax/(2s_{k})\rfloor - 2$, $s_k = \rank(\vC^\k) = \rank(\vD^\k)$
			\For{$j = 1, \ldots, m_k$}
				\State \begin{minipage}{0.8\textwidth}
				Compute (incrementally) the Arnoldi relation for $\spK_j(A,\vC^\k)$ and store $\bUU_{j+1}^\k = [\vU_1 | \cdots | \vU_{j+1}]$, $\HH_j^\k$, and $H_{j+1,j}^\k$
				\end{minipage} \label{step:krylov}
				\State \begin{minipage}{0.8\textwidth}
				Compute (incrementally) the Arnoldi relation for $\spK_j(B,\vD^\k)$ and store $\bVV_{j+1}^\k = [\vV_1 | \cdots | \vV_{j+1}]$, $\GG_j^\k$, and $G_{j+1,j}^\k$
				\end{minipage} \label{step:krylov2}
				\State Compute $Y^\k$ as the solution of the projected equation \label{step:proj-eq}
				\[
				\HH_j^\k Y + Y \big(\GG_j^\k \big)^* + \big(\bUU_j^\k \big)^* \vC^\k \big(\bVV_j^\k \vD^\k \big)^* = 0
				\]
				\State Compute residual norm $\norm{R_j^\k}$ as in \eqref{eq.res_F} or \eqref{eq.res_2}
				\If{$\norm{R_j^\k} \leq \tolres$}
					\State $m_k \leftarrow j$, $\flagconv = 1$, and go to \ref{step:SVD} \label{step:flag}
				\EndIf
			\EndFor
			\State Factor $Y^\k = \vY_L^\k \big(\vY_R^\k \big)^*$\label{step:SVD}
			\State Compute $\vZ_L^\k = \bUU_{m_k}^\k \vY_L^\k$ and $\vZ_R^\k = \bVV_{m_k}^\k \vY_R^\k$ 	\State Update $\vX_L = [\vX_L \,|\, \vZ_L^\k]$ and $\vX_R = [\vX_R \,|\, \vZ_R^\k]$
			\State $[\vX_L,\ \vX_R]\gets$ \Call{Compress}{$\vX_L$, $\vX_R$, $\tolcomp$}  \label{step:comp-sol}
			\If{$\flagconv$}
				\State \Return $\vX_L$ and $\vX_R$
			\EndIf
			\State Set $\vC^\kmore = [\vU_{m_k+1}^\k H_{m_k+1,m_k}^\k \,|\, \bUU_{m_k}^\k Y_{m_k}^\k \vE_{m_k}]$ 
			\State Set $\vD^\kmore = [\bVV_{m_k}^\k \big( Y_{m_k}^\k \big)^* \vE_{m_k} \,|\, \vV_{m_k+1}^\k G_{m_k+1,m_k}^\k]$ 
			\State $[\vC^\kmore,\ \vD^\kmore]\gets$ \Call{Compress}{$\vC^\kmore,\vD^\kmore$, $\tolcomp$} \label{step:comp-res}
		\EndFor
		\EndProcedure
	\end{algorithmic}
\end{algorithm}

\subsection{The Lyapunov equation}\label{The Lyapunov equation}
The Lyapunov equation
\begin{equation}\label{eq:lyap}
A X + X A^* + \vC \vC^* = 0,
\end{equation}
is an important special case of the more general Sylvester equation \eqref{eq:sylvester}. Indeed, in control and system theory~\cite{Antoulas2005}, it is more common to find~\eqref{eq:lyap} than the more general case. In principle, Algorithm~\ref{alg:restarted_sylv} could be directly applied to solve \eqref{eq:lyap}. However, as we will see in this section, it is possible to  enhance the algorithm by taking into account the specific structure of \eqref{eq:lyap}.

Thanks to the symmetry of \eqref{eq:lyap}, general projection techniques for Lyapunov equations generate Hermitian approximations $\Xtil = \bUU_m \Ytil \bUU_m^*$, with $\text{range}(\bUU_m) = \KK_m(A,\vC)$ and where the symmetric matrix $\Ytil$ is computed by solving the projected Lyapunov equation 
\begin{equation}\label{eq:projectedLyap}
	\HH_m Y + Y \HH_m^* + \big(\bUU_m^* \vC \big) \big(\bUU_m^* \vC \big)^* = 0, 
\end{equation}
with $\HH_m = \bUU_m^* A \bUU_m$. The norm of the residual matrix $R = A \Xtil + \Xtil A^* + \vC \vC^*$ is computed as 
\begin{equation}\label{eq:res_lyap}
	\norm{R}_F = \sqrt{2} \norm{H_{m+1,m}\vE_m^*Y_m}_F \mbox{ or } \norm{R}_2 = \norm{H_{m+1,m} \vE_m^* Y_m}_2.
\end{equation}

Employing only one approximation space is quite appealing; it potentially permits skipping the construction of the second Arnoldi relation at line~\ref{step:krylov2} of Algorithm~\ref{alg:restarted_sylv}. However, some additional considerations are needed after the first cycle because the residual matrix $R^\k$ becomes in general indefinite for $k \geq 1$. To address this, the residual is expressed in LDLT form.
Following the discussion in the previous section, at the $k$th restart the residual matrix $R^\k = A Z^\k + Z^\k A^* + R^\kless$ can be written as
\begin{align}\label{eq:symmres}
R^\k
& = [\vU_{m_k+1}^\k H_{m_k+1,m_k} \,|\, \bUU_{m_k}^\k Y^\k \vE_{m_k}]
\begin{bmatrix} 0 & I \\  I & 0 \end{bmatrix}
[\vU_{m_k+1}^\k H_{m_k+1,m_k} \,|\, \bUU_{m_k}^\k Y^\k \vE_{m_k}]^*\notag \\
& =
\vC^\kmore
\begin{bmatrix}
 0 & I \\
 I & 0 \\
\end{bmatrix}
\big(\vC^\kmore \big)^*.
\end{align}
In turn, the subspace $\KK_m(A,\vC^\kmore)$ can be used as an approximation space for the subsequent restart. The presence of the matrix $\begin{bmatrix} 0 & I \\  I & 0 \end{bmatrix}$ only affects the definition of the projected equation solved at line~\ref{step:proj-eq}, which now takes the form 
\begin{equation} \label{eq:projected_restartedLyap}
\HH_j^\k Y + Y \big(\HH_j^\k \big)^* +
\vCtil
\begin{bmatrix} 0 & I \\  I & 0 \end{bmatrix}
\vCtil^* = 0,
\qquad \vCtil = \big(\bUU_j^\k \big)^* \vC^\j.
\end{equation}
This equation can again be solved with the Bartels-Stewart method or with Hammarling's method~\cite{Hammarling1982} after splitting the right-hand side as discussed, e.g., in~\cite[Sec. 2.3]{Benner2011}. In both cases, the Hermitian structure of $\Ytil$ is preserved and can be exploited. 
 
To avoid the occurrence of complex arithmetic, an LDLT approach must be employed also during the truncation strategy, and we suggest to call Algorithm~\ref{alg:compression2} in place of Algorithm~\ref{alg:compression} at lines~\ref{step:comp-sol} and \ref{step:comp-res} of Algorithm~\ref{alg:restarted_sylv}. See, e.g., \cite{Lang2015} for similar considerations in the context of differential Riccati equations. Applying the described modifications to Algorithm~\ref{alg:restarted_sylv} returns an approximate solution of the form $X^\k = \vZ_L S \vZ_L^* \approx X$ with a (small) Hermitian matrix $S$.
\begin{algorithm}
\caption{Compression of $\vC S\vC^*$\label{alg:compression2}}
	\begin{algorithmic}[1]
		\Procedure{Compress\_Sym}{$\vC$, $S$,  $\tolcomp$}
		\State Compute Compute economy-size QR decomposition $\vC = Q_C R_C$
		\State Compute eigendecomposition $R_C S R_C^* = W \Sigma W^*$
		\State Truncate $W \Sigma W^* \approx \Wtil \Sigmatil \Wtil^*$ up to $\tolcomp$
 		\State \Return $\vCtil:= Q_C \Wtil$ and $\widetilde S := \Sigmatil$
		\EndProcedure
	\end{algorithmic}
\end{algorithm}

The final Lyapunov algorithm is stated as Algorithm~\ref{alg:restarted_lyap}.
\begin{algorithm}
	\caption{Restarted Lyapunov solver \label{alg:restarted_lyap}}
	\begin{algorithmic}[1]
		\Procedure{Restarted\_Lyap}{$A$, $\vC$,  $\memmax$, $k_{\max}$, $\tolres$, $\tolcomp$}
		\State Initialize  $\vX_L = [\,]$, $\vC^\zero = \vC$, $\flagconv = 0$, $D = I$
		\For{$k = 0, \ldots, k_{\max}$}
			\State Set $m_k = \lfloor \memmax/s_{k}\rfloor - 1$, $s_k = \rank(\vC^\k)$
			\For{$j = 1,\ldots, m_k$}
				\State \begin{minipage}{0.8\textwidth}
					Compute (incrementally) the Arnoldi relation for $\spK_j(A,\vC^\k)$ and store $\bUU_{j+1}^\k = [\vU_1 | \cdots | \vU_{j+1}]$, $\HH_j^\k$, and $H_{j+1,j}^\k$
				\end{minipage}
				\State Compute $Y^\k$ as the solution of the projected equation 
				$$
				H_j^\k Y + Y \big(H_j^\k \big)^* + \big(\bUU_j^\k \big)^* \vC^\k D \big(\bUU_j^\k \vC^\k \big)^*,
				$$
				\State Compute the residual norm $\norm{R_j^\k}$ as in \eqref{eq:res_lyap}
				\If{$\norm{R_j^\k} \leq \tolres$}
					\State $m_k \leftarrow j$, $\flagconv = 1$ and go to \ref{step:factor}
				\EndIf
			\EndFor
			\State Factor $Y^\k = \widetilde Y^\k S (\widetilde Y^\k)^*$ \label{step:factor}
			\State Update $\vX_L = [\vX_L \,|\, \bUU_{m_k}^\k \widetilde Y^\k]$
			\State $[\vX_L, S]$ $\leftarrow$ \Call{Compress\_Sym}{$\vX_L$, $\diag(I, S)$, $\tolcomp$}
			\If{$\flagconv$}
				\State \Return $\vX_L$ and $S$
			\EndIf
			\State Set $\vC^\kmore = [\vU_{m_k+1}^\k H_{m_k+1,m_k}^\k \,|\, \bUU_{m_k}^\k Y^\k \vE_{m_k}]$ 
			\State $[\vC^\kmore, D]$ $\leftarrow$ \Call{Compress\_Sym}{$\vC^\kmore$, $[0,I;I,0]$, $\tolcomp$}
		\EndFor
		\EndProcedure
	\end{algorithmic}
\end{algorithm}

\subsubsection{Positive semidefinite approximations}
A peculiar property of the Lyapunov equation~\eqref{eq:lyap} is that the solution $X$ is Hermitian positive semidefinite (SPSD) whenever $A$ is stable; in other words, all the eigenvalues of $A$ are in the open left half-plane $\C_-$ \cite{Snyders1970}. It is desirable to retain this property in an approximate solution. In the context of projection methods for Lyapunov equations, this property is ensured when $A$ is negative definite; i.e., not only $A$ but also its Hermitian part $(A+A^*)/2$ is stable. In particular, in SKSM it would follow that the matrix $\HH_m = \bUU_m^* A \bUU_m$ is stable for every $m$ and, therefore, that the solution $Y$ of the projected equation $\HH_m Y + Y \HH_m^* + (\bUU_m^* \vC) \big(\bUU_m^* \vC \big)^* = 0$, as well as the corresponding approximation $\Xtil = \bUU_m Y \bUU_m^*$, are SPSD.

In our framework, the same arguments show that $X^\zero$ is SPSD if $A$ is negative definite. However, the subsequent $Z^\k$, $0 < k \leq k_{\max}$, are in general indefinite (although still symmetric), due to the indefiniteness of the residual matrices $R^\k$. Nevertheless, it is reasonable to expect the approximate solution $X^{(\kbar)} = \sum_{k = 0}^{\bar k} Z^\k$, $0 < {\bar k} \leq k_{\max}$, to be close to a positive semidefinite matrix. In particular, if
\begin{equation}\label{eq:eigendec}
	X^{(\kbar)} = [U_+ \,|\, U_- \,|\, U_0]
	\begin{bmatrix}
	\Lambda_+ \\
		&\Lambda_-\\
		& 	& 0
	\end{bmatrix}
	[U_+ \,|\, U_- \,|\, U_0]^*
\end{equation} 
is the eigendecomposition of $X^{(\kbar)}$, partitioned according to the sign of the eigenvalues, then one can consider $X_+^{(\kbar)} := U_+ \Lambda_+ U_+^*$ as an SPSD approximation to the solution. In practice, $X_+^{(\kbar)}$ is obtained by applying a slight modification of Algorithm~\ref{alg:compression2}, which neglects the part of the eigendecomposition corresponding to the negative eigenvalues, to the matrix $X^{(\kbar)}$ returned by Algorithm~\ref{alg:restarted_sylv}. Such a step might deteriorate the accuracy of the computed solution. However, the next result shows that the error and the residual norm associated with $X_+^{(\kbar)}$ would be close to the ones associated with $X^{(\kbar)}$.
\begin{lemma}
Let the Lyapunov equation \eqref{eq:lyap} have the SPSD solution $X$. For a Hermitian approximation $X^ {(\kbar)}$, let $X_+^ {(\kbar)}$ be defined as in~\eqref{eq:eigendec} and set $R = A X^ {(\kbar)} + X^ {(\kbar)} A^* + \vC \vC^*$, $R_+ = A X_+^ {(\kbar)} + X_+^ {(\kbar)} A^* + \vC \vC^*$. Then,
\begin{align*}
	\big\|{X - X_+^{(\kbar)}}\big\| & \leq 2 \big\|{X - X^{(\kbar)}}\big\|,\\
	\norm{R_+} & \leq \norm{R} + 2 \norm{A}_2 \, \big\|{X - X^{(\kbar)}}\big\|,
\end{align*}
where $\norm{\cdot}$ corresponds to the Frobenius or the spectral norm.
\end{lemma}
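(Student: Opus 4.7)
The plan is to reduce both bounds to a single key inequality: $\|X^{(\bar k)} - X_+^{(\bar k)}\| \leq \|X - X^{(\bar k)}\|$, valid for both the Frobenius and the spectral norm. Once this is established, the first bound follows immediately from the triangle inequality, and the second bound follows by writing $R_+ - R$ as a short Sylvester-type expression in the error $X_+^{(\bar k)} - X^{(\bar k)}$.

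First, I would set $E := X - X^{(\bar k)}$, so that $X^{(\bar k)} = X - E$, and recall that $X^{(\bar k)} - X_+^{(\bar k)} = U_- \Lambda_- U_-^*$ is precisely the negative part of $X^{(\bar k)}$ in the spectral decomposition~\eqref{eq:eigendec}. For the Frobenius norm, I would invoke the classical fact that $X_+^{(\bar k)}$ is the orthogonal projection of $X^{(\bar k)}$ onto the closed convex cone of Hermitian positive semidefinite matrices with respect to the Frobenius inner product. Since $X$ belongs to this cone by assumption, the nonexpansiveness of the projection gives $\|X^{(\bar k)} - X_+^{(\bar k)}\|_F \leq \|X^{(\bar k)} - X\|_F = \|E\|_F$. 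For the spectral norm, the projection argument is not available, so I would instead apply Weyl's inequality: each eigenvalue of $X^{(\bar k)} = X - E$ satisfies $\lambda_i(X^{(\bar k)}) \geq \lambda_i(X) - \|E\|_2 \geq -\|E\|_2$, using $X \succeq 0$. Hence every negative eigenvalue of $X^{(\bar k)}$ has magnitude at most $\|E\|_2$, which implies $\|X^{(\bar k)} - X_+^{(\bar k)}\|_2 = \|U_-\Lambda_- U_-^*\|_2 \leq \|E\|_2$.

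With the key inequality in hand, the first bound follows by the triangle inequality,
\begin{equation*}
\big\|X - X_+^{(\bar k)}\big\| \leq \big\|X - X^{(\bar k)}\big\| + \big\|X^{(\bar k)} - X_+^{(\bar k)}\big\| \leq 2\big\|X - X^{(\bar k)}\big\|.
\end{equation*}
For the second bound, I would compute directly
\begin{equation*}
R_+ - R = A\big(X_+^{(\bar k)} - X^{(\bar k)}\big) + \big(X_+^{(\bar k)} - X^{(\bar k)}\big) A^*,
\end{equation*}
and use submultiplicativity (in either norm, $\|A M\| \leq \|A\|_2 \|M\|$ and $\|M A^*\| \leq \|A\|_2 \|M\|$) together with the key inequality to obtain
\begin{equation*}
\|R_+ - R\| \leq 2\|A\|_2\, \big\|X_+^{(\bar k)} - X^{(\bar k)}\big\| \leq 2\|A\|_2\, \big\|X - X^{(\bar k)}\big\|,
\end{equation*}
and the claim follows from another application of the triangle inequality.

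The main obstacle is handling the spectral norm, where $X_+^{(\bar k)}$ is no longer the best SPSD approximation; here the Weyl eigenvalue perturbation bound, crucially using that $X$ itself is SPSD, is what replaces the projection argument. Everything else is a one-line triangle-inequality or submultiplicativity step.
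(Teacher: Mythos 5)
Your proof is correct and follows essentially the same route as the paper: both arguments hinge on the key inequality $\big\|X^{(\kbar)} - X_+^{(\kbar)}\big\| \leq \big\|X - X^{(\kbar)}\big\|$, obtained from the fact that $X_+^{(\kbar)}$ is a best SPSD approximant of $X^{(\kbar)}$ (which the paper invokes for both norms via Halmos/Higham), followed by the identical triangle-inequality and $R_+ - R$ computations. The only variation is that for the spectral norm you replace the best-approximant citation with a direct Weyl's-inequality argument showing the negative eigenvalues of $X^{(\kbar)}$ are bounded by $\big\|X - X^{(\kbar)}\big\|_2$; this is a valid and somewhat more self-contained justification of the same inequality (note only that what you call ``nonexpansiveness'' of the Frobenius projection is really the best-approximation property, since $X$ lies in the SPSD cone).
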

\begin{proof}
Because $X^{(\kbar)}$ is Hermitian, $X_+^{(\kbar)}$ verifies
\begin{equation}\label{eq:opt}
	X_+^{(\kbar)} = {\text{argmin}}_{G \text{ is SPSD}} \norm{X^{(\kbar)} - G},
\end{equation}	
both for the Frobenius and the spectral norm \cite{Halmos,Higham}. Therefore,
\begin{align*}
	\big\|{X - X_+^{(\kbar)}}\big\|
	& \leq \big\|{X - X^{(\kbar)}}\big\| + \big\|{X^{(\kbar)} - X_+^{(\kbar)}}\big\| \leq 2 \big\|{X - X^{(\kbar)}}\big\|,
\end{align*}
where the last inequality follows from~\eqref{eq:opt} by taking into account that $X$ is SPSD.

For the second inequality, applying again \eqref{eq:opt} yields
\begin{align*}
	\big\|{A X_+^{(\kbar)} + X_+^{(\kbar)} A^* + \vC \vC^*}\big\|
	& = \big\|{R- A \big(X^{(\kbar)} - X_+^{(\kbar)} \big) + \big(X^{(\kbar)} - X_+^{(\kbar)} \big) A^*}\big\| \\
	& \leq \norm{R} + 2 \norm{A}_2 \big\|{X^{(\kbar)} - X_+^{(\kbar)}}\big\| \\
	& \leq \norm{R} + 2 \norm{A}_2 \big\|{X - X^{(\kbar)}}\big\|.
\end{align*}
\end{proof}

\section{Residual and error analysis of Algorithm~\ref{alg:restarted_sylv}}\label{sec:error}
The compression steps performed on the intermediate residuals and solutions introduce some inexactness.  In the spirit of the analysis of inexact Krylov methods (as in, e.g., \cite{Kuerschner2018, simoncini2003theory}), we study how these compression steps affect the residual and error norms associated with the approximation returned by Algorithm~\ref{alg:restarted_sylv}. The relations retrieved in this section hold for both the Frobenius norm and the spectral norm.

Let us suppose that Algorithm~\ref{alg:restarted_sylv} terminates after $\kbar < k_{\max}$ restarts, so that $\big\|{R^{(\kbar)}}\big\| < \tolres$. Notice that the returned approximation $X^{(\kbar)}$ satisfies \[X^{(\kbar)} = \sum_{j=0}^{\kbar}(Z^\j - \Zerr^\j),\] where the matrices $\Zerr^\j$ represent the components removed by the compression step at line~\ref{step:comp-sol}. In particular, it holds that $\norm{\Zerr^\j} \leq \tolcomp$ for $j = 0, \dots, \kbar$. 

Similarly, the sequence of residuals verifies
\begin{equation}\label{eq:residuals}
	A Z^\j + Z^\j B + R^{(j-1)} - \Rerr^{(j-1)} = R^\j,
	\qquad
	j = 0, \dots, \kbar,
\end{equation}
where $\Rerr^\j$ takes into account the effect of the compression step at line~\ref{step:comp-res}; i.e., $\norm{\Rerr^\j} \leq \tolcomp$, $j = 0, \dots, \kbar$.

Summing up \eqref{eq:residuals} for $j = 0, \dots, \kbar$, we retrieve
\[
A X^{(\kbar)} + X^{(\kbar)} B + \vC \vD^*
= R^{(\kbar)} + \sum_{j=0}^{\kbar} \Rerr^\j -A \sum_{j=0}^{\kbar} \Zerr^\j - \sum_{j=0}^{\kbar} \Zerr^\j B.
\]
Therefore, the residual norm associated with $X^{(\kbar)}$ is bounded by
\begin{equation}\label{eq:res-estimate}
	\norm{AX^{(\kbar)}+X^{(\kbar)}B+\vC \vD^*}
	\leq
	\tolres + (\kbar + 1) (\norm{A} + \norm{B} + 1) \tolcomp.
\end{equation}
Equation~\eqref{eq:res-estimate} shows how the truncation tolerance $\tolcomp$ is connected with the final attainable accuracy from our restarted routine. Indeed, a reasonable way to choose $\tolcomp$ in Algorithm~\ref{alg:restarted_sylv} is such that $(k_{\max} + 1)(\norm{A} + \norm{B} + 1)\tolcomp \leq \tolres$.

We conclude by estimating  the distance from the true solution $X$. To this end, we remark that the difference $X^{(\kbar)} + \sum_{j=0}^{\kbar} \Zerr^\j - X$ exactly solves the Sylvester equation
\begin{equation}\label{eq:res-equation}
	A \Big(X^{(\kbar)} + \sum_{j=0}^{\kbar} \Zerr^\j - X \Big)
	+ \Big(X^{(\kbar)} + \sum_{j=0}^{\kbar} \Zerr^\j - X \Big) B
	= R^{(\kbar)} + \sum_{j=0}^{\kbar} \Rerr^\j.
\end{equation}
The impact of perturbing the right-hand side on the solution can be estimated via the norm of the inverse of the solution operator. This is particularly simple, when $A$, $B$ are normal and the spectra of $A$, $-B$ are separated by a vertical line in the complex plane.
\begin{lemma}
	Let $A, B \in \C^{n\times n}$ be normal matrices with  eigenvalues $\lambda_{A,j}$ and $\lambda_{B,j}$, such that
	\[
	0 \leq \Re(\lambda_{A,1})
	\leq \dots
	\leq \Re(\lambda_{A,n}),
	\qquad
	0 \leq \Re(\lambda_{B,1})
	\leq \dots
	\leq \Re(\lambda_{B,n}).
	\]
	If Algorithm~\ref{alg:restarted_sylv} terminates after $\kbar<k_{max}$ iterations then the returned solution $X^{(\kbar)}$ verifies
	\[
	\big\|{X^{(\kbar)} - X}\big\|
	\leq \frac{1}{\Re(\lambda_{A,1}) + \Re(\lambda_{B,1})}(\tolres +(\kbar+1)\tolcomp) + (\kbar +1) \tolcomp.
	\]
\end{lemma}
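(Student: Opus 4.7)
The plan is to extract an explicit Sylvester equation for the error $X^{(\kbar)} - X$, bound its right-hand side using the termination condition together with the perturbations introduced by the compression steps, and then apply a bound on the inverse of the Sylvester operator that exploits normality and the spectral separation hypothesis.

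More concretely, I would first recall equation~\eqref{eq:res-equation}: the matrix $E := X^{(\kbar)} + \sum_{j=0}^{\kbar}\Zerr^\j - X$ satisfies
\[
AE + EB = R^{(\kbar)} + \sum_{j=0}^{\kbar}\Rerr^\j.
\]
Using $\bigl\|R^{(\kbar)}\bigr\|\leq\tolres$ (termination criterion) together with $\bigl\|\Rerr^\j\bigr\|\leq\tolcomp$ for $j=0,\dots,\kbar$, the triangle inequality gives $\bigl\|R^{(\kbar)}+\sum_j\Rerr^\j\bigr\|\leq \tolres+(\kbar+1)\tolcomp$. Then I would invoke the operator bound
\[
\bigl\|E\bigr\| \;\leq\; \frac{1}{\Re(\lambda_{A,1})+\Re(\lambda_{B,1})}\bigl\|AE+EB\bigr\|,
\]
which is where the normal + spectral separation hypotheses enter.

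For the Frobenius norm this bound follows by vectorizing: $\vect(AE+EB)=(I\otimes A+B^T\otimes I)\vect(E)$; since $A$ and $B$ are normal, so is the Kronecker sum, whose eigenvalues are $\lambda_{A,i}+\lambda_{B,j}$, and under the hypothesis all have real part at least $\Re(\lambda_{A,1})+\Re(\lambda_{B,1})>0$, so the smallest singular value of the operator is at least that quantity. For the spectral norm I would use the integral representation
\[
E \;=\; \int_0^\infty e^{-tA}\bigl(AE+EB\bigr)e^{-tB}\,\mathrm{d}t,
\]
(valid because both spectra lie in the closed right half-plane and are actually separated from the imaginary axis), together with $\|e^{-tA}\|_2=e^{-t\Re(\lambda_{A,1})}$ and the analogous estimate for $B$, which follow from normality. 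This is the main technical step; the rest is routine.

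Finally I would conclude by one more application of the triangle inequality,
\[
\bigl\|X^{(\kbar)}-X\bigr\|
\;\leq\; \bigl\|E\bigr\| + \Bigl\|\sum_{j=0}^{\kbar}\Zerr^\j\Bigr\|
\;\leq\; \frac{\tolres+(\kbar+1)\tolcomp}{\Re(\lambda_{A,1})+\Re(\lambda_{B,1})} + (\kbar+1)\tolcomp,
\]
using $\bigl\|\Zerr^\j\bigr\|\leq\tolcomp$. The only delicate point is ensuring the operator-inverse bound is valid in both norms under the stated hypotheses; the rest reduces to triangle-inequality bookkeeping over the $\kbar+1$ compression cycles.
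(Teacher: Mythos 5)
Your proof is correct and takes essentially the same route as the paper: both start from equation~\eqref{eq:res-equation}, bound its right-hand side by $\tolres+(\kbar+1)\tolcomp$, apply the bound $\norm{E}\leq\big(\Re(\lambda_{A,1})+\Re(\lambda_{B,1})\big)^{-1}\norm{AE+EB}$ for normal coefficients, and finish with the triangle inequality using $\norm{\sum_{j=0}^{\kbar}\Zerr^{(j)}}\leq(\kbar+1)\tolcomp$. The only difference is that you prove the operator-inverse bound from scratch (vectorization for the Frobenius norm, the integral representation for the spectral norm) where the paper simply cites Theorem~1.1 of \cite{horn}; your derivation of that bound is sound.
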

\begin{proof}
	By applying Theorem 1.1 in \cite{horn} to the equation \eqref{eq:res-equation} we get
	\[
	\Big\|{{X^{(\kbar)} + \sum_{j=0}^{\kbar} \Zerr^\j-X}} \Big\|
	\leq \frac{1}{\Re(\lambda_{A,1}) + \Re(\lambda_{B,1})}(\tolres +(\kbar+1) \tolcomp).
	\]
	The claim follows by using the triangular inequality and the bound \linebreak$\norm{\sum_{j=0}^{\kbar} \Zerr^\j} \leq (\kbar+1) \tolcomp$.
\end{proof}

\section{Numerical experiments} \label{sec:numexperiments}

To demonstrate the efficiency of our proposed methods, we examine their behavior with respect to other viable methods on two standard problems where the coefficient matrices stem from the discretization of certain differential operators.  These other methods include the Extended Krylov Subspace Method (EKSM) (detailed, e.g., in \cite{Breiten2016, Simoncini2007} but implemented like a Rational KSM with poles at zero and infinity) and the Standard Krylov Subspace Method (SKSM) of \cite{PalittaSimoncini2018}, which is tailored to equations with symmetric coefficient matrices. 

EKSM can be implemented ``exactly" in the sense that linear systems with $A$ are solved at very high accuracy via, e.g., a direct sparse solver like the \MATLAB \emph{backslash} operator; or ``inexactly," in which inversions of $A$ are computed approximately via a block Krylov subspace method.  We test both of these variants of EKSM.  Instead of using \emph{backslash} explicitly per call to $A^\inv$, we precompute and store the Cholesky or LU factorization. In the particular case of iterative solves by $A$, we use a retooled (and possibly ILU preconditioned) block conjugate gradients (BCG) method from \cite{Dubrulle2001} when $A$ is Hermitian positive definite, and (possibly ILU preconditioned) block GMRES otherwise. We employ a rather small tolerance on the relative residual norm when BCG and block GMRES are applied to solve the linear systems with $A$. In particular, we set this tolerance to $10^{-8}$, namely two orders of magnitude smaller that the outer tolerance on the relative residual norm.  However, the novel results about inexact procedures in the basis construction of extended (and rational) Krylov subspaces presented in \cite{Kuerschner2018} may be adopted to further reduce the computational cost of the basis construction. We also remind the reader that at each EKSM iteration $2s$ new basis vectors are added to the current basis so that, at the $m$th EKSM iteration, the computed extended Krylov subspace has dimension $2(m+1)s$.
 
We would like to underscore that the comparisons between our compress-and-restart scheme, the inexact variants of EKSM, and SKSM are the fairest. Indeed, in these families of methods, only the action of $A$ on (block) vectors is allowed, making all these solvers potentially matrix-free. This is not the case for EKSM with a direct inner solver.

All methods make use of block Krylov subspace techniques and, consequently, sparse-matrix-matrix multiplication (SpMM) between $A$ and block vectors $\vV$ that could vary in size.  Since the performance of such block operations depends on machine architecture (in particular, the level 3 cache \cite{DuffMarroneRadicati1997}), memory hierarchies, and choice of libraries, we measure the performance of each method via the number calls to $A$ (``$A$-calls") and the total number of columns or column vectors to which $A$ is applied, which we refer here to as \texttt{matvec}s.\footnote{Note that, strictly speaking, a \texttt{matvec} is usually defined as the application of $A$ to a single column vector $\vv$.}  The number of $A$-calls is a crude measure of memory operations, whereas the number of \texttt{matvec}s is directly related to the amount of floating-point operations (FLOPs).  The ratio between FLOPs and memory operations is known as {\em computational intensity}, and algorithms with high computational intensity, i.e., many FLOPs to memory operations, are preferable for high-performance architectures; see, e.g., \cite{BallardCarsonDemmel2014}. We approximate computational intensity by looking at the ratio between $A$-calls and \texttt{matvec}s, which we here refer to as ``efficiency." For a more in-depth analysis of the performance of block operations and potential gains over column-by-column applications of $A$, see, e.g., the thesis by Birk \cite{Birk2015}.

Unless otherwise noted, all reported residual norms are relative and measured in the Frobenius norm.  For all experiments, the residual tolerance is set to $10^{-6}$.

All results were obtained by running \MATLAB R2017b \cite{MATLAB} on a 
standard node of the Linux cluster Mechthild hosted at the Max Planck Institute for Dynamics of Complex Technical Systems in Magdeburg, Germany.\footnote{See \url{https://www.mpi-magdeburg.mpg.de/cluster/mechthild} for further details.}

The full code to reproduce our numerical results can be found \url{https://gitlab.com/katlund/compress-and-restart-KSM}.  For the original version of SKSM, which we have adapted, see \url{https://zenodo.org/record/3252320#.XjM3UN-YVuR}.

\subsection{2D Laplacian} \label{laplacian_2d_rand_lyap}
In this example, $A$ is the second-order, centered, finite-difference discretization of the two-dimensional Laplacian operator $-(\partial_{xx} + \partial_{yy})$ on the unit square unit cube $\Omega = (0,1)^2$.  We take $n = 100$ grid points in each direction, resulting in a matrix of size $10,000 \times 10,000$.  The matrix $A$ is Hermitian positive definite.  We solve
\[
A X + X A + \vC \vC^* = 0,
\]
where $\vC \in \C^{n^2 \times 3}$ is drawn from a normal random distribution, and it is such that $\norm{\vC\vC^*}_F = 1$. In this example we call {\sc Restart\_Lyap} (Algorithm~\ref{alg:restarted_lyap}) and equipped with the enhancements described in section~\ref{The Lyapunov equation}.

Both the exact and inexact variants of EKSM need 15 iterations to meet the prescribed accuracy. As a result, a low-dimensional extended Krylov subspace of dimension 96 is constructed. We mimic such a feature by setting the memory buffer of the compress-and-restart procedure, i.e., $\memmax$, equal to $96$.  We report the results in Table~\ref{Tab.Laplacian}.
\begin{table}[htbp!]
	\begin{center}
	{\small
	\setlength\tabcolsep{0.9pt}
	\begin{tabular}{r|c|c|c|c|c|c}
        						& Its (Restarts)	& $\text{rank}(X^\k)$	& $A$-calls	& \texttt{matvec}s	& efficiency	& Time (s) \\ \hline
		{\sc Restarted\_Lyap}	& 158 (20) 			& 53  					& 158		& 1845			& 11 			& 4.02 \\
		EKSM (BCG)				& 15 (--) 			& 56					& 2615		& 7845			& 3 			& 8.29 \\
		EKSM (BCG+ILU) 			& 15 (--)			& 56					& 900 		& 2700 			& 3				& 4.17 \\
		EKSM (exact)			& 15 (--) 			& 56  					& 30		& 90			& 3 			& 0.34 \\
		SKSM (two-pass Lanczos)	& 148 (--) 			& 65					& 295		& 885			& 3				& 4.08 \\
	\end{tabular}
	}
	\caption{Example~\ref{laplacian_2d_rand_lyap}. Performance measures. $s = 3$, $\memmaxmath=96$. \label{Tab.Laplacian}}
	\end{center}
\end{table}

Our routine {\sc Restarted\_Lyap} needs 20 restarts to converge for a total number of 158 iterations. The compress-and-restart scheme lets us maintain a low storage demand and, at each restart, a polynomial Krylov subspace of dimension (at most) 96 is constructed, as in the case of the comparable EKSM. In SKSM with two-pass Lanczos, thanks to the symmetry of $A$, we can use short-term recurrences so that the whole basis is never stored, only three basis vectors at a time. The low-rank factors of the solution are recovered by means of a two-pass strategy; see \cite{PalittaSimoncini2018} for more details. Despite the very low storage demands of SKSM, the number of $A$-calls exceeds that of the compress-and-restart scheme.

Both the numbers of $A$-calls and \texttt{matvec}s of {\sc Restarted\_Lyap} are much lower than those accrued by the inexact procedures EKSM (BCG) and EKSM (BCG+ILU). Moreover, our procedure is more efficient for block operations while maintaining a computational time comparable to that of the other SpMM-dominant routines, thus reinforcing its potential for further speed-ups in communication-dominant high-performance environments. Timings for EKSM (exact) largely benefit from having precomputed and stored the Cholesky factors of $A$.

All the routines we tested return a low-rank numerical solution and, for this example, the approximation computed by {\sc Restarted\_Lyap} has the lowest rank. In Figure~\ref{fig:lapl2D_rpk} (left) we report the ranks of both the residual and the approximate solution computed at the end of each restart, together with the rank of the final solution.  These results illustrate that the truncation strategy {\sc Compress\_Sym} (Algorithm~\ref{alg:compression2}) is able to maintain a moderate rank in the residual $R^\k$, for all $k = 0, \ldots, 20$. This is crucial for making the construction of the subsequent restart space feasible, when necessary.
\begin{figure}\label{fig:lapl2D_rpk}
	\begin{center}
	\begin{tabular}{cc}
		{\begin{tikzpicture}
			\begin{axis}[width=.48\linewidth, height=.32\textheight, legend pos = north east, xlabel = Cycle index ($k$), ymax = 90]
				\addplot table[x index=0, y index=1] {./rpk_n10000_r3_m96_res_ranks.dat};
				\addplot table[x index=0, y index=1] {./rpk_n10000_r3_m96_sol_ranks.dat};
				\legend{\tiny{$\rank\big(R^\k\big)$}, \tiny{$\rank\big(X^\k\big)$}};
			\end{axis}
		\end{tikzpicture}}
		&
		{\begin{tikzpicture}
			\begin{semilogyaxis}[width=.48\linewidth, height=.32\textheight, legend pos = north east, xlabel = Iteration index, ymax= 1]
				\addplot[violet] table[x index=0, y index=1] {./rpk_n10000_r3_m96_residual_norms.dat};
				\addplot[only marks, mark = |, gray, mark options={scale=6}] table[x index=0, y index=1] {./rpk_n10000_r3_m96_cycle_markers.dat};
				\legend{\tiny{residual norm}, \tiny{restart}};
			\end{semilogyaxis}
		\end{tikzpicture}}
	\end{tabular}
	\end{center}
	\caption{Example~\ref{laplacian_2d_rand_lyap}. Residual and solution ranks (left) and residual norms (right) for the compress-and-restart polynomial Krylov method. Vertical tick marks indicate the start of a new cycle.}
\end{figure}
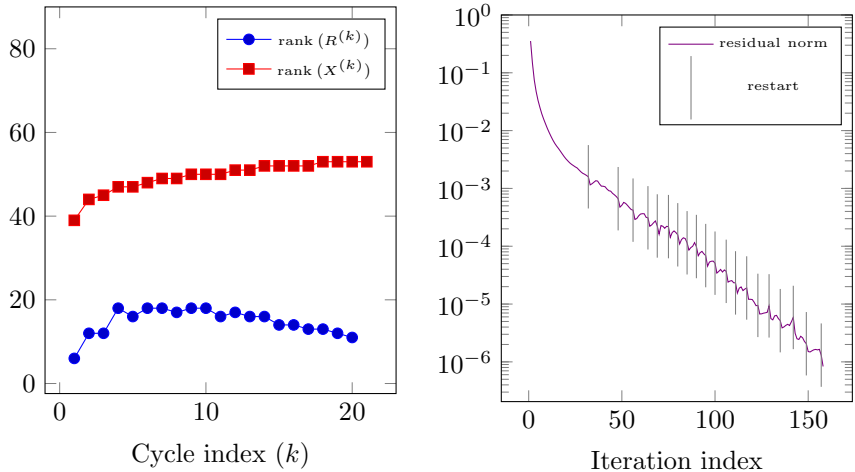
In Figure~\ref{fig:lapl2D_rpk} (right) we also plot, in logarithmic scale, the relative residual norm history through all the 158 iterations performed by {\sc Restarted\_Lyap}. We can see that the relative residual norm does not have a smooth behavior. This is due to the a Galerkin condition we impose on the residual. Even though this phenomenon has not been extensively analyzed in the matrix equation literature yet, it is quite well understood in the linear system setting. See, e.g., \cite{Cullum1996,Cullum1995}. Imposing a minimal residual condition in place of a Galerkin condition might be beneficial, although such a strategy has some peculiar shortcomings in the matrix equation framework. See, e.g., \cite{Lin2013, HuReichel1992, Palitta2019}.

We now illustrate some observations about the possible computation of an indefinite approximate solution. See also the end of section~\ref{The Lyapunov equation}. In Figure~\ref{fig:lapl2D_eig} (left) the blue circles denote the minimum nonzero eigenvalue of $X^\k$ for $k=0,\ldots,20$, and of the final solution. The black dashed line marks the $x$-axis. We can appreciate how these eigenvalues are all positive so that $X^\k$ is positive semidefinite for all $k$.
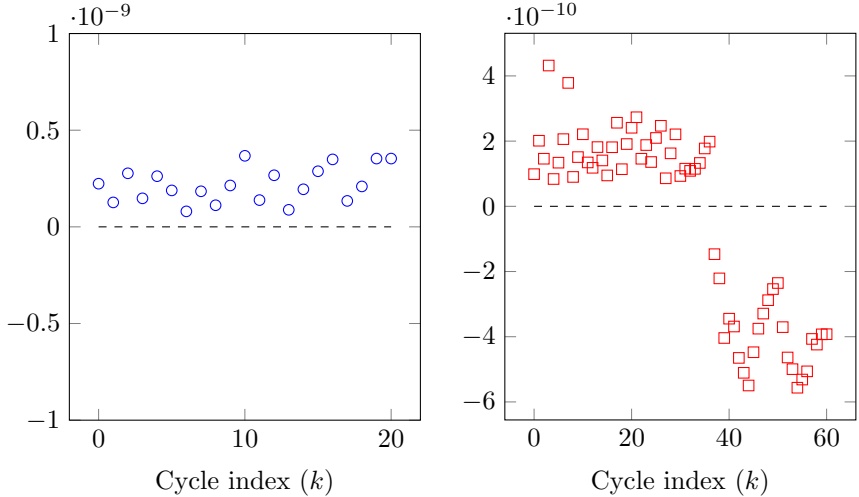
\begin{figure}\label{fig:lapl2D_eig}
	\begin{center}
	\begin{tabular}{cc}
	{\begin{tikzpicture}
		\begin{axis}[width=.48\linewidth, height=.32\textheight, legend pos = north east, xlabel = Cycle index ($k$), ymax= 1e-9, ymin= -1e-9]
			\addplot[mark=o, color=blue, only marks] table[x index=0, y index=1] {./rpk_n10000_r3_m96_eig.dat};
			\addplot[dashed, color=black] table[x index=0, y index=2] {./rpk_n10000_r3_m96_eig.dat};
		\end{axis}
	\end{tikzpicture}}
		&
		{\begin{tikzpicture}
		\begin{axis}[width=.48\linewidth, height=.32\textheight, legend pos = north east, xlabel = Cycle index ($k$)]
			\addplot[mark=square, color=red, only marks] table[x index=0, y index=1] {./rpk_n10000_r3_m96_eig_nonnorm.dat};
			\addplot[dashed, color=black] table[x index=0, y index=2] {./rpk_n10000_r3_m96_eig_nonnorm.dat};
		\end{axis}
	\end{tikzpicture}}
	\end{tabular}
	\end{center}
	\caption{Example~\ref{laplacian_2d_rand_lyap}. Smallest nonzero eigenvalue of $X^\k$ in the case of a normalized right-hand side (left) and non-normalized right-hand side (right).}
\end{figure}

We now consider the same Lyapunov equation as before but we do not normalize the random matrix $\vC \vC^*$.  This is now a harder problem for the polynomial Krylov subspace method and, with $\memmax = 96$, {\sc Restarted\_Lyap} needs 60 restarts to converge.
The large number of restarts is due to the large rank of $R^\k$, especially for large $k$, which limits us to a couple of iterations per restart in order to stay within the memory buffer prescribed by $\memmax$. In Figure~\ref{fig:lapl2D_eig} (right) we report the minimum nonzero eigenvalue of $X^\k$ for $k = 0, \ldots, 60$. The matrix $X^\k$ stops being positive semidefinite for $k \geq 39$, and we thus apply the strategy presented at the end of section~\ref{The Lyapunov equation} to compute $X^\k_+$, $k=60$, in place of $X^\k$.
For this example, such a strategy has multiple advantages. Indeed, in addition to providing a positive semidefinite approximate solution, it reduces the rank of the computed solution while maintaining the prescribed accuracy. In particular, $\rank(X^\k) = 81$ while $\rank\big(X_+^\k \big) = 77$ and $\norm{A X_+^\k + X_+^\k A^* + \vC \vC^*}_F / \norm{\vC \vC^*}_F = 8.84 \times 10^{-7}$.

We conclude this example by showing one of the most remarkable features of our novel compress-and-restart strategy. The total memory demand of a Krylov subspace method is in general difficult to predict a priori, as it requires knowing the dimension of the subspace in which a satisfactory approximation can be found.  If a situation calls for stringent memory management, then methods that increase the basis size every iteration, like EKSM, may not be able to reach the desired accuracy before exhausting memory resources.  Table~\ref{Tab.Laplacian2} demonstrates the superiority of {\sc Restarted\_Lyap} in precisely such a scenario, where $\memmax = 250$ and the right-hand side $\vC \vC^*$, $\vC \in \R^{n^2\times s}$, $s = 25$, is a low-rank approximation of the matrix $C \in \R^{n^2\times n^2}$ representing the discretization of $\exp((x_1^p + x_2^p + x_3^p + x_4^p)^{1/p})$, $p = 2$, on the hypercube $[-1,1]^4$.
{\renewcommand{\arraystretch}{1.2}
\begin{table}
	\begin{center}
	{\small
	\begin{tabular}{r|c|c}
        						& Its (Restarts)	& Rel. Res. \\ \hline
		{\sc Restarted\_Lyap}	& 165 (33) 			& $7.06\times 10^{-7}$\\
		EKSM (BCG)				& 5 (--) 			&  $1.51\times 10^{-4}$ \\
		EKSM (BCG+ILU) 			& 5 (--)			& $1.60\times 10^{-4}$ \\
		EKSM (exact)			& 5 (--) 			& $1.43\times 10^{-4}$ \\
		SKSM (two-pass Lanczos)	& 69 (--) 			& $5.76\times 10^{-4}$ \\
	\end{tabular}
	}
	\vspace{0.1cm}
	
	\caption{Example~\ref{laplacian_2d_rand_lyap}. Performance measures. $s=25$, $\memmaxmath=250$.}\label{Tab.Laplacian2}
	\end{center}
\end{table}
}
All the EKSM variants are forced to stop as soon as a space of dimension 250 is constructed; in tests not reported here, we found that $\memmax = 400$ allows EKSM to reach the desired residual tolerance. SKSM with two-pass Lanczos seems to suffer from the large rank of $\vC$. Indeed, the computed residual norm differs from the actual one by some orders of magnitude, likely due to the loss of orthogonality in the computed basis. A full, or perhaps even partial, re-orthogonalization of the basis may fix this issue but doing so in a memory-sensitive manner remains open.  On the other hand, {\sc Restarted\_Lyap} successfully reaches the desired residual tolerance, thus demonstrating its potential in not only memory-limited situations but also for matrix equations whose right-hand side has high rank.

\subsection{Convection-diffusion equation} \label{conv_diff_3d_sylv}
We turn our attention to the main problem, a Sylvester equation of the form \eqref{eq:sylvester}, where the coefficient matrices $A$ and $B$ stem from the second-order, centered, finite-difference discretization of the 3D convection-diffusion operators
\begin{equation}\label{Ex.2:operators}
\mathcal{L}_A(u) =
-\varepsilon \Delta u + \vec{w}_A \cdot \nabla u,
\quad
\mathcal{L}_B(u) =
-\varepsilon \Delta u + \vec{w}_B \cdot \nabla u,
\end{equation}
on the unit cube $\Omega = (0,1)^3$, respectively. The viscosity parameter is $\varepsilon = 0.01$ while the convection vectors $\vec{w}_A$, $\vec{w}_B$ are defined as 
\[
\vec{w}_A = (x \sin(x), y \cos(y), e^{z^2-1}),
\quad
\vec{w}_B = (yz(1-x^2), 0, e^z).
\]
If the operators in~\eqref{Ex.2:operators} are discretized  with $n$ equidistant nodes in each direction, then the nonsymmetric matrices $A$ and $B$ are each of dimension $n^3$. We consider two different problem sizes for \eqref{eq:sylvester}, $n = 25$ and $n = 80$.  The resulting problems are of dimension $15625$ and $512000$, respectively.  The low-rank matrices $\vC, \vD \in \C^{n^3 \times 3}$ are once again drawn from a normal random distribution, and they are such that $\norm{\vC \vD^*}_F = 1$.  
 
For $n = 25$, all the EKSM variants we tested-- EKSM (exact), EKSM (BGMRES), and EKSM (BGMRES+ILU)-- need 21 iterations to converge, in which case, two extended Krylov subspaces of dimension 132 are constructed. As before, we set the memory buffer of our compress-and-restart routine equal to the memory consumption of EKSM; i.e., we set $\memmax$ in Algorithm~\ref{alg:restarted_sylv} equal to 264. With this setting, {\sc Restarted\_Sylv} needs 2 restarts for a total of 85 iterations to converge, and it computes an approximate solution of rank 57, equal to the rank of the solution returned by all the EKSM variants.
 
In Figure~\ref{Ex.2:figure} (left) we depict the ranks of both the residual and of the approximate solution computed at each {\sc Restarted\_Sylv} restart, together with the rank of the final solution, while in Figure~\ref{Ex.2:figure} (right), the entire relative residual norm history is reported.  We again see that the low-rank compression procedure (Algorithm~\ref{alg:compression}) is able to maintain a moderate rank in both the residual and the approximate solution.
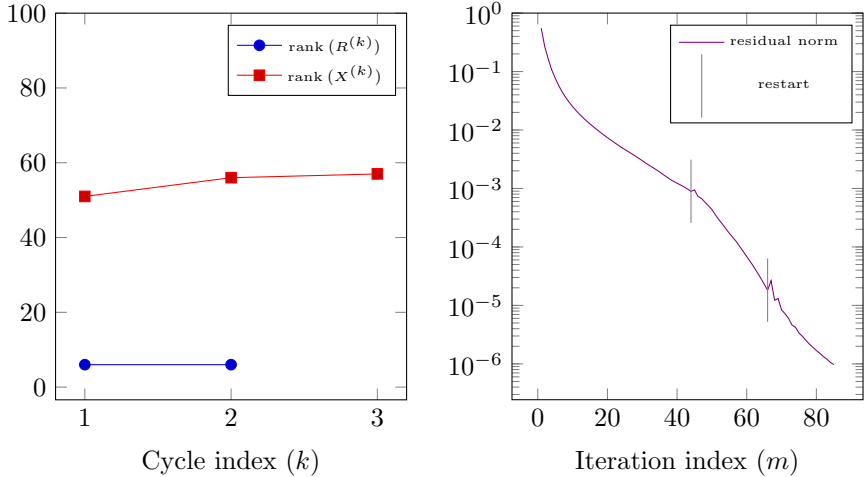
\begin{figure}\label{fig:conv_diff_rpk}
	\begin{center}
	\begin{tabular}{cc}
		{\begin{tikzpicture}
			\begin{axis}[width=.48\linewidth, height=.32\textheight, legend pos = north east, xlabel = Cycle index ($k$), ymax = 100]
				\addplot table[x index=0, y index=1] {./rpk_n15625_r3_m264_res_ranks.dat};
				\addplot table[x index=0, y index=1] {./rpk_n15625_r3_m264_sol_ranks.dat};
				\legend{\tiny{$\rank\big(R^\k\big)$}, \tiny{$\rank\big(X^\k\big)$}};
			\end{axis}
		\end{tikzpicture}}
		&
		{\begin{tikzpicture}
			\begin{semilogyaxis}[width=.48\linewidth, height=.32\textheight, legend pos = north east, xlabel = Iteration index ($m$), ymax= 1]
				\addplot[violet] table[x index=0, y index=1] {./rpk_n15625_r3_m264_residual_norms.dat};
				\addplot[only marks, mark = |, gray, mark options={scale=6}] table[x index=0, y index=1] {./rpk_n15625_r3_m264_cycle_markers.dat};
				\legend{\tiny{residual norm}, \tiny{restart}};
			\end{semilogyaxis}
		\end{tikzpicture}}
	\end{tabular}
	\end{center}
	\caption{Example~\ref{conv_diff_3d_sylv}, $n = 25$. Residual and solution ranks (left) and residual norms (right) for the compress-and-restart polynomial Krylov method. Vertical tick marks indicate the start of a new cycle.}\label{Ex.2:figure}
\end{figure}

We remind the reader that two different subspaces have to be generated, one by $A$ and the other by $B$, when Sylvester equations are solved by projection techniques. The computational efforts devoted to such tasks may significantly differ from each other if the matrices $A$ and $B$ have dissimilar spectral properties. This can be appreciated by looking at the results in Table~\ref{table:conv_diff}. In this example, iteratively solving linear systems with $B$ requires more iterations than with $A$. Therefore, a larger number of $B$-calls is required in EKSM (BGMRES), even though the extended Krylov subspaces generated by $A$ and $B$ have the same dimensions.  This phenomenon is reversed in EKSM (BGMRES+ILU), indicating that the ILU preconditioner for $B$ performs better than the one for $A$.  Our compress-and-restart procedure is not influenced by such issues, though, since by design, the spaces for $A$ and $B$ are computed to the same dimension (assuming no breakdowns).  It is indeed possible, and in some cases desirable, to allow for different basis sizes for $A$ and $B$, especially if the operators differ significantly in size or complexity.  However, this flexibility is not trivial to implement, especially with the compression at step~\ref{step:comp-res} of Algorithm~\ref{alg:restarted_sylv}, which could cause $\vC^\kmore$ and $\vD^\kmore$ to have different numbers of columns.  For the simplicity of presentation and implementation, we therefore do not explore this option further in the present work.

\begin{table}
	\begin{center}
		{\small
		\setlength\tabcolsep{1.3pt}
			\begin{tabular}{r|c|c|c|c|c|c}
				& $A$-calls	& \texttt{matvec}s	& efficiency	& B-calls	& \texttt{matvec}s	& efficiency \\ \hline
				{\sc Restarted\_Sylv}	& 85	& 378	& 4	& 85	& 378	& 4 \\
				EKSM (BGMRES)  			& 1819	& 5457	& 3 	& 2445	& 7335	& 3 \\
				EKSM (BGMRES+ILU)  		& 455	& 1365	& 3 	& 376	& 1128	& 3 \\
				EKSM (exact)		& 42 	& 126  	& 3		& 42	& 126 	& 3 \\
			\end{tabular}
		}
		\caption{Example~\ref{conv_diff_3d_sylv}. Performance measures. $n=25$, $s=3$, $\memmaxmath = 264$. \label{table:conv_diff}}
	\end{center}
\end{table}

The extra memory allocation required by the iterative solution of linear systems with $A$ and $B$ during the basis construction must be taken into account for precisely identifying the memory demands of EKSM (BGMRES) and EKSM (BGMRES+ILU). To the best of our knowledge, such an issue has not yet been rigorously explored in the literature, and it should not be underestimated. Increasing the density of the discretization grid to $n = 80$ (for a problem size of $512000$), leads to 26 iterations for EKSM (exact) to converge and the construction of two extended Krylov subspaces of dimension 162 each. With $\memmax = 324$, EKSM (BGMRES) and EKSM (BGMRES+ILU) are not able to achieve the prescribed accuracy, because at some point, the number of (outer) extended Krylov basis vectors already computed plus the number of vectors needed by the (inner) block polynomial Krylov subspace to accurately compute the next (outer) basis vector exceeds $\memmax$.

We conclude this example by pointing out that {\sc Restarted\_Sylv} turns out to be competitive also in terms of computational time for both $n = 25$ and $n = 80$; see Table~\ref{table2:conv_diff}.  Indeed, a preconditioner tailored to the problem may benefit EKSM with inexact solves, but the design of an effective preconditioner is a difficult task and largely problem-dependent.  Our compress-and-restart procedure achieves excellent performance without the need for preconditioning while still managing severe memory limitations.
\begin{table}
	\begin{center}
		{\small
			\begin{tabular}{r|c|c}
				& \multicolumn{2}{c}{Time (s)} \\ 
				& $n=25$, $\memmax = 264$ &
				$n=80$, $\memmax = 324$ \\ \hline
				{\sc Restarted\_Sylv}	& 5.96 		& 549.99 \\
				EKSM (BGMRES)			& 298.15 	& -- \\
				EKSM (BGMRES+ILU)		& 14.62 	& -- \\
				EKSM (exact)			& 3.44 		& 589.72\\
			\end{tabular}
		}
		\caption{Example~\ref{conv_diff_3d_sylv}. Computational times for different values of $n$. \label{table2:conv_diff}}
	\end{center}
\end{table}

\section{Conclusions}
Modern computing architectures pose many challenges demanding the optimization of not only operation counts (FLOPs) but also memory allocation and movement.  Much work has been devoted to adapting iterative methods for large and sparse linear systems to these architectures, but straightforward extensions of successful strategies for linear systems to matrix equations are not always feasible. We have demonstrated how to apply a common and effective Krylov subspace technique for linear systems, namely restarts, by introducing a compression step to mitigate the growing rank of the residual.  The resulting compress-and-restart method is viable for both Sylvester and Lyapunov matrix equations, and given a fixed memory requirement, it tunes the computable basis size automatically at each restart.  Compared to extended Krylov subspace methods (EKSM), which require an inner solver for applications of $A^\inv$ and therefore either well-designed preconditioners or fast sparse Cholesky and LU factorizations, our compress-and-restart polynomial Krylov methods require very little set-up and converge with competitive timings in situations where (unrestarted) EKSM run out of memory. 

Our compress-and-restart method is a success not only for matrix equations but potentially other classes of higher-order problems where memory resources are even more limited, due to the curse of dimensionality.
Moreover, our compress-and-restart paradigm is not restricted to the use of block polynomial Krylov subspaces; different approximation spaces can be used as well.

\section*{Acknowledgments}
 The third and the fourth authors are members of the Italian INdAM Research group GNCS.


\bibliographystyle{siamplain}
\bibliography{restarts_mat_eq}

\begin{thebibliography}{10}

\bibitem{AhmadJaimoukhaFrangos2010}
{\sc M.~I. Ahmad, I.~M. Jaimoukha, and M.~Frangos}, {\em {Krylov Subspace
  Restart Scheme for Solving Large-Scale Sylvester Equations}}, in 2010 Am.
  Control Conf. Marriott Waterfront, Balt. MD, 2010, pp.~5726--5731.

\bibitem{Antoulas2005}
{\sc A.~C. Antoulas}, {\em Approximation of Large-Scale Dynamical Systems},
  SIAM Publications, Philadelphia, PA, 2005.

\bibitem{BakerEmbreeSabino2015}
{\sc J.~Baker, M.~Embree, and J.~Sabino}, {\em {Fast singular value decay for
  Lyapunov solutions with nonnormal coefficients}}, SIAM J. Matrix Anal. Appl.,
  36 (2015), pp.~656--668.

\bibitem{BallardCarsonDemmel2014}
{\sc G.~Ballard, E.~C. Carson, J.~W. Demmel, M.~Hoemmen, N.~Knight, and
  O.~Schwartz}, {\em {Communication lower bounds and optimal algorithms for
  numerical linear algebra}}, Acta Numer., 23 (2014), pp.~1--155,
  \url{https://doi.org/10.1017/S0962492914000038}.

\bibitem{Bartels1972}
{\sc R.~H. Bartels and G.~W. Stewart}, {\em {A}lgorithm 432: {S}olution of the
  {M}atrix {E}quation {$AX+XB=C$}}, Comm. {ACM}, 15 (1972), pp.~820--826.

\bibitem{Benner2011}
{\sc P.~Benner, P.~Ezzatti, D.~Kressner, E.~S. Quintana-Ort\'{\i}, and
  A.~Rem\'{o}n}, {\em A mixed-precision algorithm for the solution of
  {L}yapunov equations on hybrid {CPU}-{GPU} platforms}, Parallel Comput., 37
  (2011), pp.~439--450, \url{https://doi.org/10.1016/j.parco.2010.12.002}.

\bibitem{Benner2014a}
{\sc P.~Benner, G.~E. Khoury, and M.~Sadkane}, {\em On the squared {S}mith
  method for large-scale {S}tein equations}, Numerical Linear Algebra with
  Applications, 21 (2014), pp.~645--665,
  \url{https://doi.org/10.1002/nla.1918}.

\bibitem{Benner2014}
{\sc P.~Benner, P.~K\"{u}rschner, and J.~Saak}, {\em Self-generating and
  efficient shift parameters in {ADI} methods for large {L}yapunov and
  {S}ylvester equations}, Electron. Trans. Numer. Anal., 43 (2014),
  pp.~142--162.

\bibitem{Benner2009}
{\sc P.~Benner, R.-C. Li, and N.~Truhar}, {\em On the {ADI} method for
  {S}ylvester equations}, J. Comput. Appl. Math., 233 (2009), pp.~1035--1045,
  \url{https://doi.org/10.1016/j.cam.2009.08.108}.

\bibitem{Benner2013}
{\sc P.~Benner and J.~Saak}, {\em Numerical solution of large and sparse
  continuous time algebraic matrix {R}iccati and {L}yapunov equations: a state
  of the art survey}, GAMM-Mitt., 36 (2013), pp.~32--52,
  \url{https://doi.org/10.1002/gamm.201310003}.

\bibitem{Bini2012}
{\sc D.~A. Bini, B.~Iannazzo, and B.~Meini}, {\em Numerical solution of
  algebraic {R}iccati equations}, vol.~9 of Fundamentals of Algorithms, SIAM,
  Philadelphia, PA, 2012.

\bibitem{Binning2013}
{\sc A.~Binning}, {\em Solving second and third-order approximations to {DSGE}
  models: A recursive {S}ylvester equation solution}, {Norges Bank Working
  Paper 18}, 2013.

\bibitem{Birk2015}
{\sc S.~Birk}, {\em {Deflated shifted block Krylov subspace methods for
  Hermitian positive definite matrices}}, PhD thesis, Fakult{\"{a}}t f{\"{u}}r
  Mathematik und Naturwissenschaften, Bergische Universit{\"{a}}t Wuppertal,
  2015.

\bibitem{Breiten2016}
{\sc T.~Breiten, V.~Simoncini, and M.~Stoll}, {\em Low-rank solvers for
  fractional differential equations}, Electron. Trans. Numer. Anal., 45 (2016),
  pp.~107--132.

\bibitem{Cullum1995}
{\sc J.~K. Cullum}, {\em Peaks, plateaus, numerical instabilities in a
  {G}alerkin minimal residual pair of methods for solving {A}x = b}, Applied
  Numerical Mathematics, 19 (1995), pp.~255 -- 278,
  \url{https://doi.org/10.1016/0168-9274(95)00086-0}.

\bibitem{Cullum1996}
{\sc J.~K. Cullum and A.~Greenbaum}, {\em Relations between {G}alerkin and
  norm-minimizing iterative methods for solving linear systems}, {SIAM} Journal
  on Matrix Analysis and Applications, 17 (1996), pp.~223--247,
  \url{https://doi.org/10.1137/S0895479893246765}.

\bibitem{Davis2016}
{\sc T.~A. Davis, S.~Rajamanickam, and W.~M. Sid-Lakhdar}, {\em A survey of
  direct methods for sparse linear systems}, Acta Numer., 25 (2016),
  pp.~383--566, \url{https://doi.org/10.1017/S0962492916000076}.

\bibitem{Druskin2011}
{\sc V.~Druskin and V.~Simoncini}, {\em Adaptive rational {K}rylov subspaces
  for large-scale dynamical systems}, Systems Control Lett., 60 (2011),
  pp.~546--560, \url{https://doi.org/10.1016/j.sysconle.2011.04.013}.

\bibitem{Dubrulle2001}
{\sc A.~A. Dubrulle}, {\em {Retooling the method of block conjugate
  gradients}}, Electron. Trans. Numer. Anal., 12 (2001), pp.~216--233.

\bibitem{DuffMarroneRadicati1997}
{\sc I.~S. Duff, M.~Marrone, G.~Radicati, and C.~Vittoli}, {\em {Level 3 basic
  linear algebra subprograms for sparse matrices: a user-level interface}}, ACM
  Trans. Math. Softw., 23 (1997), pp.~379--401,
  \url{https://doi.org/10.1145/275323.275327}.

\bibitem{ADI}
{\sc N.~S. Ellner et~al.}, {\em New {A}{D}{I} model problem applications}, in
  Proceedings of 1986 ACM Fall joint computer conference, IEEE Computer Society
  Press, 1986, pp.~528--534.

\bibitem{FrommerLundSzyld2017}
{\sc A.~Frommer, K.~Lund, and D.~B. Szyld}, {\em {Block Krylov subspace methods
  for functions of matrices}}, Electron. Trans. Numer. Anal., 47 (2017),
  pp.~100--126.

\bibitem{Grasedyck2004a}
{\sc L.~Grasedyck}, {\em Existence of a low rank or {$\mathcal H$}-matrix
  approximant to the solution of a {S}ylvester equation}, Numer. Linear Algebra
  Appl., 11 (2004), pp.~371--389.

\bibitem{Gutknecht2007}
{\sc M.~H. Gutknecht}, {\em {Block Krylov space methods for linear systems with
  multiple right-hand sides: An introduction}}, in Mod. Math. Model. Methods
  Algorithms Real World Syst., A.~H. Siddiqi, I.~S. Duff, and O.~Christensen,
  eds., New Delhi, 2007, Anamaya, pp.~420--447.

\bibitem{Halmos}
{\sc P.~R. Halmos}, {\em Positive approximants of operators}, Indiana Univ.
  Math. J., 21 (1971/72), pp.~951--960,
  \url{https://doi.org/10.1512/iumj.1972.21.21076}.

\bibitem{Hammarling1982}
{\sc S.~Hammarling}, {\em Numerical solution of the stable, nonnegative
  definite {L}yapunov equation}, IMA J. Numer. Anal., 2 (1982), pp.~303--323.

\bibitem{Higham}
{\sc N.~J. Higham}, {\em Computing a nearest symmetric positive semidefinite
  matrix}, Linear Algebra Appl., 103 (1988), pp.~103--118,
  \url{https://doi.org/10.1016/0024-3795(88)90223-6}.

\bibitem{Higham2002}
{\sc N.~J. Higham}, {\em Accuracy and Stability of Numerical Algorithms}, SIAM,
  Philadelphia, PA, second~ed., 2002.

\bibitem{Hodel1996}
{\sc A.~S. Hodel, B.~Tenison, and K.~R. Poolla}, {\em Numerical solution of the
  {L}yapunov equation by approximate power iteration}, Linear Algebra Appl.,
  236 (1996), pp.~205--230.

\bibitem{horn}
{\sc R.~A. Horn and F.~Kittaneh}, {\em Two applications of a bound on the
  {H}adamard product with a {C}auchy matrix}, vol.~3, 1998, pp.~4--12,
  \url{https://doi.org/10.13001/1081-3810.1010}.

\bibitem{HuReichel1992}
{\sc D.~Y. Hu and L.~Reichel}, {\em {Krylov-subspace methods for the Sylvester
  equation}}, Linear Algebra Appl., 172 (1992), pp.~283--313.

\bibitem{Jaimoukha1994}
{\sc I.~M. Jaimoukha and E.~M. Kasenally}, {\em Krylov subspace methods for
  solving large {L}yapunov equations}, {SIAM} J. Numer. Anal., 31 (1994),
  pp.~227--251.

\bibitem{residual}
{\sc I.~M. Jaimoukha and E.~M. Kasenally}, {\em Krylov subspace methods for
  solving large {L}yapunov equations}, SIAM J. Numer. Anal., 31 (1994),
  pp.~227--251, \url{https://doi.org/10.1137/0731012}.

\bibitem{Kressner2008}
{\sc D.~Kressner}, {\em {Memory-efficient Krylov subspace techniques for
  solving large-scale Lyapunov equations}}, Proc. IEEE Int. Symp. Comput.
  Control Syst. Des.,  (2008), pp.~613--618,
  \url{https://doi.org/10.1109/CACSD.2008.4627370}.

\bibitem{Kressner2011}
{\sc D.~Kressner and C.~Tobler}, {\em Low-rank tensor {K}rylov subspace methods
  for parametrized linear systems}, SIAM J. Matrix Anal. Appl., 32 (2011),
  pp.~1288--1316, \url{https://doi.org/10.1137/100799010}.

\bibitem{Kuerschner2018}
{\sc P.~K\"{u}rschner and M.~Freitag}, {\em Inexact methods for the low rank
  solution to large scale {L}yapunov equations},  (2018).
\newblock ArXiv preprint: 1809.06903.

\bibitem{Lang2015}
{\sc N.~Lang, H.~Mena, and J.~Saak}, {\em On the benefits of the {$LDL^T$}
  factorization for large-scale differential matrix equation solvers}, Linear
  Algebra Appl., 480 (2015), pp.~44--71,
  \url{https://doi.org/10.1016/j.laa.2015.04.006}.

\bibitem{Lin2013}
{\sc Y.~Lin and V.~Simoncini}, {\em {Minimal residual methods for large scale
  Lyapunov equations}}, Appl. Numer. Math., 72 (2013), pp.~52--71,
  \url{https://doi.org/10.1016/j.apnum.2013.04.004}.

\bibitem{MATLAB}
{\sc {MATLAB}}, {\em version 9.3.0.713579 (R2017b)}, The MathWorks Inc.,
  Natick, Massachusetts, 2017.

\bibitem{Palitta2016}
{\sc D.~Palitta and V.~Simoncini}, {\em Matrix-equation-based strategies for
  convection-diffusion equations}, BIT, 56 (2016), pp.~751--776,
  \url{https://doi.org/10.1007/s10543-015-0575-8}.

\bibitem{PalittaSimoncini2018}
{\sc D.~Palitta and V.~Simoncini}, {\em {Computationally enhanced projection
  methods for symmetric Sylvester and Lyapunov matrix equations}}, J. Comput.
  Appl. Math., 330 (2018), pp.~1--16,
  \url{https://doi.org/10.1016/j.cam.2017.08.011},
  \url{https://arxiv.org/abs/1602.05033}.

\bibitem{Palitta2019}
{\sc D.~Palitta and V.~Simoncini}, {\em Optimality properties of {G}alerkin and
  {P}etrov-{G}alerkin methods for linear matrix equations},  (2019).
\newblock To appear in {V}ietnam {J}ournal of {M}athematics.

\bibitem{Penzl2000}
{\sc T.~Penzl}, {\em {Eigenvalue decay bounds for solutions of Lyapunov
  equations: The symmetric case}}, Syst. Control Lett., 40 (2000),
  pp.~139--144, \url{https://doi.org/10.1016/S0167-6911(00)00010-4}.

\bibitem{Saad1990}
{\sc Y.~Saad}, {\em Numerical solution of large {L}yapunov equations}, in
  Signal processing, scattering and operator theory, and numerical methods
  (Amsterdam, 1989), vol.~5 of Progr. Systems Control Theory, Birkh\"auser
  Boston, Boston, MA, 1990, pp.~503--511.

\bibitem{Saad1986}
{\sc Y.~Saad and M.~H. Schultz}, {\em G{MRES}: a generalized minimal residual
  algorithm for solving nonsymmetric linear systems}, SIAM J. Sci. Statist.
  Comput., 7 (1986), pp.~856--869, \url{https://doi.org/10.1137/0907058}.

\bibitem{Sauter2011}
{\sc S.~A. Sauter and C.~Schwab}, {\em Boundary element methods}, vol.~39 of
  Springer Series in Computational Mathematics, Springer-Verlag, Berlin, 2011,
  \url{https://doi.org/10.1007/978-3-540-68093-2}.

\bibitem{Simoncini2007}
{\sc V.~Simoncini}, {\em A new iterative method for solving large-scale
  {L}yapunov matrix equations}, SIAM J. Sci. Comput., 29 (2007),
  pp.~1268--1288, \url{https://doi.org/10.1137/06066120X}.

\bibitem{Simoncini2016}
{\sc V.~Simoncini}, {\em Computational methods for linear matrix equations},
  SIAM Rev., 58 (2016), pp.~377--441, \url{https://doi.org/10.1137/130912839}.

\bibitem{simoncini2003theory}
{\sc V.~Simoncini and D.~B. Szyld}, {\em Theory of inexact {K}rylov subspace
  methods and applications to scientific computing}, SIAM Journal on Scientific
  Computing, 25 (2003), pp.~454--477.

\bibitem{Snyders1970}
{\sc J.~Snyders and M.~Zakai}, {\em On nonnegative solutions of the equation
  {$AD+DA^{\prime} =-C$}}, SIAM J. Appl. Math., 18 (1970), pp.~704--714,
  \url{https://doi.org/10.1137/0118063}.

\bibitem{Soodhalter2017}
{\sc K.~M. Soodhalter}, {\em {Stagnation of block GMRES and its relationship to
  block FOM}}, Electron. Trans. Numer. Anal., 46 (2017), pp.~162--189.

\bibitem{SorensenAntoulas2002}
{\sc D.~C. Sorensen and A.~C. Antoulas}, {\em The {S}ylvester equation and
  approximate balanced reduction}, vol.~351/352, 2002, pp.~671--700,
  \url{https://doi.org/10.1016/S0024-3795(02)00283-5}.
\newblock Fourth special issue on linear systems and control.

\bibitem{Zhou1996}
{\sc K.~Zhou, J.~C. Doyle, and K.~Glover}, {\em Robust and Optimal Control},
  Prentice-Hall, Upper Saddle River, NJ, 1996.

\end{thebibliography}
\end{document}